\documentclass[11pt]{amsart}
\usepackage[dvips]{graphicx}
\usepackage{amsmath}
\usepackage{amssymb}
\usepackage{amscd}
\usepackage{mathrsfs}
\usepackage{color}
 
\usepackage[all]{xy}%


\DeclareFontEncoding{OT2}{}{}
\DeclareFontSubstitution{OT2}{cmr}{m}{n}

\DeclareFontFamily{OT2}{cmr}{\hyphenchar\font45 }
\DeclareFontShape{OT2}{cmr}{m}{n}{%
   <5><6><7><8><9>gen*wncyr%
   <10><10.95><12><14.4><17.28><20.74><24.88>wncyr10}{}
\DeclareFontShape{OT2}{cmr}{b}{n}{%
   <5><6><7><8><9>gen*wncyb%
   <10><10.95><12><14.4><17.28><20.74><24.88>wncyb10}{}

\DeclareMathAlphabet{\mathcyr}{OT2}{cmr}{m}{n}
\DeclareMathAlphabet{\mathcyb}{OT2}{cmr}{b}{n}
\SetMathAlphabet{\mathcyr}{bold}{OT2}{cmr}{b}{n}


\setlength {\parindent}{8pt}
\setlength {\textwidth}{38pc}
\setlength {\textheight}{50pc}
\setlength{\oddsidemargin}{0pc}
\setlength{\evensidemargin}{0pc}

\theoremstyle{plain}
    \newtheorem{theorem}{Theorem}[section]
    \newtheorem{proposition}[theorem]{Proposition}
    \newtheorem{lemma}[theorem]{Lemma}
    \newtheorem{corollary}[theorem]{Corollary}
\theoremstyle{definition}
    \newtheorem{definition}[theorem]{Definition}
    
    \newtheorem{example}[theorem]{Example}
    \newtheorem{remark}[theorem]{Remark}
    
\def\Alphabet{A,B,C,D,E,F,G,H,I,J,K,L,M,N,O,P,Q,R,S,T,U,V,W,X,Y,Z}
\def\alphabet{a,b,c,d,e,f,g,h,i,j,k,l,m,n,o,p,q,r,s,t,u,v,w,x,y,z}
\def\endpiece{xxx}
\def\labelenumi{(\theenumi)}
\def\makeAlphabet[#1]{\expandafter\makeA#1,xxx,}
\def\makealphabet[#1]{\expandafter\makea#1,xxx,}
\def\makeA#1,{\def\temp{#1}\ifx\temp\endpiece\else%
\mkbb{#1}\mkfrak{#1}\mkbf{#1}\mkcal{#1}\mkscr{#1}\expandafter\makeA\fi}%
\def\makea#1,{\def\temp{#1}\ifx\temp\endpiece\else\mkfrak{#1}\mkbf{#1}\expandafter\makea\fi}%
\def\mkbb#1{\expandafter\def\csname bb#1\endcsname{\mathbb{#1}}}
\def\mkfrak#1{\expandafter\def\csname fr#1\endcsname{\mathfrak{#1}}}
\def\mkbf#1{\expandafter\def\csname b#1\endcsname{\mathbf{#1}}}
\def\mkcal#1{\expandafter\def\csname c#1\endcsname{\mathcal{#1}}}
\def\mkscr#1{\expandafter\def\csname s#1\endcsname{\mathscr{#1}}}
\def\makeop[#1]{\xmakeop#1,xxx,}
\def\mkop#1{\expandafter\def\csname #1\endcsname{{\mathrm{#1}}\,}} %
\def\xmakeop#1,{\def\temp{#1}\ifx\temp\endpiece\else\mkop{#1}\expandafter\xmakeop\fi}%
\def\makesymb[#1]{\xmakesymb#1,xxx,}
\def\mksymb#1{\expandafter\def\csname #1\endcsname{{\mathrm{#1}}}} %
\def\xmakesymb#1,{\def\temp{#1}\ifx\temp\endpiece\else\mksymb{#1}\expandafter\xmakeop\fi}%
\makeAlphabet[\Alphabet]
\makealphabet[\alphabet]
\makeop[Spec, Spm, Spf]
\makesymb[pr,id,rank,Ker,Coker,Im,Hom]

\newcommand{\sh}{\mathbin{\mathcyr{sh}}}
\newcommand{\Li}{\mathrm{Li}}
\newcommand{\li}{\mathrm{li}}

\newcommand{\wt}{\mathrm{wt}}
\newcommand{\dep}{\mathrm{dep}}
\newcommand{\blambda}{\boldsymbol\lambda}
\newcommand{\bmu}{\boldsymbol\mu}
\newcommand{\bnu}{\boldsymbol\nu}

\address{Department of Mathematics, Keio University, 3-14-1 Hiyoshi, Kouhoku-ku,Yokohama 223-8522, Japan}
\email{onomath2@gmail.com}

\address{Department of Mathematics, Keio University, 3-14-1 Hiyoshi, Kouhoku-ku,Yokohama 223-8522, Japan}
\email{yamashu@math.keio.ac.jp}

\title{Shuffle product of finite multiple polylogarithms}
\author{Masataka Ono and Shuji Yamamoto}

\begin{document}

\maketitle
\begin{abstract}
In this paper, we define a finite sum analogue of multiple polylogarithms inspired by the work of Kaneko and Zaiger \cite{KZ} 
and prove that they satisfy a certain analogue of the shuffle relation.
Our result is obtained by using a certain partial fraction decomposition due to Komori-Matsumoto-Tsumura \cite{KMT}.
As a corollary, we give an algebraic interpretation of our shuffle product.
 \end{abstract}

\tableofcontents
\setcounter{section}{0}

\section{Introduction}

Multiple polylogarithms
$$
\Li_{k_1, \ldots, k_r}(T)=\sum_{l_1, \ldots, l_r \in \bbZ_{\geq 1}}\frac{T^{l_1+\cdots+l_r}}{l^{k_1}_1(l_1+l_2)^{k_2}\cdots (l_1+\cdots+l_r)^{k_r}} \quad (r \in \bbZ_{\geq1})
$$
and their values at $T=1$, i.e., multiple zeta values (MZVs)
$$
\zeta(k_1, \ldots, k_r)=\Li_{k_1, \ldots, k_r}(1)=\sum_{l_1, \ldots, l_r \in \bbZ_{\geq 1}}\frac{1}{l^{k_1}_1(l_1+l_2)^{k_2}\cdots (l_1+\cdots+l_r)^{k_r}}
$$
are related to many areas including number theory (\cite{DG}), topology (\cite{LM}) and mathematical physics (\cite{BK}), and studied by many authors (for example, \cite{BBBL, BH, Gon}). Here $k_1, \ldots, k_r$ are positive integers, and $k_r$ is larger than 1 for MZVs.
One important property of multiple polylogarithms is the shuffle relation
$$
\Li_\bk(T)\Li_{\bk'}(T)=\Li_{\bk \sh \bk'}(T),
$$
for two finite sequences $\bk=(k_1, \ldots, k_r)$ and $\bk'=(k'_1, \ldots, k'_{r'})$ of positive integers with $k_r, k'_{r'} \; (r, r' \in \bbZ_{\geq1})$.
Here, $\bk \sh \bk'$ is the shuffle product of $\bk$ and $\bk'$ and $\Li_{\bk \sh \bk'}(T)$ 
is the finite sum of multiple polylogarithms corresponding to $\bk \sh \bk'$.
See Definition \ref{shuffle product} for the definition of shuffle products.

In this paper, we introduce a `finite sum version' $\li_\bk(T)$ of multiple polylogarithms 
and prove that they satisfy a certain analogue of shuffle relation.

\begin{definition} \label{ringB}
We define a $\bbQ$-algebra $\cB$ by 
$$
\cB:=\left.\Biggl(\prod_{p:\text{prime}}\bbF_p[T]\Biggr)\right/\Biggl(\bigoplus_{p:\text{prime}}\bbF_p[T]\Biggr).
$$
Thus, an element of $\cB$ is represented by a family $(f_p)_p$ of polynomial $f_p \in \bbF_p[T]$, 
and two families $(f_p)_p$ and $(g_p)_p$ represent the same element of $\cB$ if and only if $f_p=g_p$ for all but finitely many primes $p$.
We also denote such an element of $\cB$ simply by $f_p$ omitting $(\ )_p$ if there is no fear of confusion.
For instance, $T^p$ denotes an element of $\cB$ whose $p$-component is $T^p \in \bbF_p[T]$.
\end{definition}

\begin{definition}[Finite multiple polylogarithm] \label{FMP}
For a positive integer $r$ and $\bk=(k_1, \ldots, k_r) \in (\bbZ_{\geq 1})^r$, 
we define the finite multiple polylogarithm (FMP) $\li_{\bk}(T) \in \cB$ by
\begin{align*}
\li_{\bk}(T)=\li_{k_1, \ldots, k_r}(T)= \begin{cases}
 \displaystyle\sideset{}{'}\sum_{0<l_1, \ldots, l_r<p}\frac{T^{l_1+\cdots+l_r}}{l^{k_1}_1(l_1+l_2)^{k_2}\cdots(l_1+\cdots+l_r)^{k_r}} & (r \geq1), \\
 1 & (r=0).
 \end{cases}
\end{align*}
Here, $\sideset{}{'}\sum$ denotes the sum of fractions whose denominators are prime to $p$.
\end{definition}

Our main theorem is the following:

\begin{theorem}[=Theorem \ref{MainTheorem}]
For non-negative integers $r, r'$ and $\bk=(k_1, \ldots, k_r) \in (\bbZ_{\geq1})^r$, $\bk'=(k'_1, \ldots, k'_{r'}) \in (\bbZ_{\geq1})^{r'}$, 
set $k:=k_1+\cdots+k_r, k':=k'_1+\cdots+k'_{r'}$. Then we have
$$
\li_\bk(T)\li_{\bk'}(T) \equiv \li_{\bk \sh \bk'}(T) \pmod{\cR_{k+k', k+k'-1}}.
$$
Moreover, $\li_{\bk}(T)\li_{\bk'}(T)-\li_{\bk \sh \bk'}(T) \in \cR_{k+k', k+k'-1}$ can be calculated explicitly from $\li_\bk(T)$ and $\li_{\bk'}(T)$.
\end{theorem}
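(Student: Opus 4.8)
In the finite setting there is no iterated integral representation of $\li_\bk(T)$, so the classical derivation of the shuffle relation via the shuffle product of iterated integrals is unavailable. The plan instead is to use the purely algebraic incarnation of the shuffle product at the level of series: the shuffle of two ``words'' is realized by the partial fraction decomposition of Komori--Matsumoto--Tsumura, obtained by iterating the elementary identity
$$
\frac{1}{X^aY^b}=\sum_{i=0}^{a-1}\binom{b-1+i}{i}\frac{1}{(X+Y)^{b+i}X^{a-i}}+\sum_{j=0}^{b-1}\binom{a-1+j}{j}\frac{1}{(X+Y)^{a+j}Y^{b-j}}.
$$
I would apply this decomposition to $\li_\bk(T)\li_{\bk'}(T)$ written as a single double finite sum, and then separately control those terms on which the identity is not literally valid because a newly created denominator becomes $\equiv 0 \pmod p$.

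Writing $a_i=l_1+\cdots+l_i$ $(1\le i\le r)$ and $b_j=l'_1+\cdots+l'_j$ $(1\le j\le r')$ for the partial sums of the two families of summation variables, one has
$$
\li_\bk(T)\li_{\bk'}(T)=\sideset{}{'}\sum_{\substack{0<l_1,\dots,l_r<p\\ 0<l'_1,\dots,l'_{r'}<p}}\frac{T^{a_r+b_{r'}}}{a_1^{k_1}\cdots a_r^{k_r}\,b_1^{k'_1}\cdots b_{r'}^{k'_{r'}}},
$$
where the prime records that all $a_i$ and $b_j$ are prime to $p$; note that the exponent of $T$ is the sum $a_r+b_{r'}$ of the two largest denominators, which is exactly the configuration resolved by the identity above. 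Applying the Komori--Matsumoto--Tsumura decomposition to $1/(a_1^{k_1}\cdots b_{r'}^{k'_{r'}})$, on the ``interior'' locus where every partial sum produced by the decomposition is again prime to $p$, rewrites it as a $\bbQ$-linear combination of terms $1/(m_1^{e_1}\cdots m_{r+r'}^{e_{r+r'}})$ with $m_1<\cdots<m_{r+r'}$ the partial sums attached to a shuffle $\bk''$ of $\bk$ and $\bk'$ and $e_1+\cdots+e_{r+r'}=k+k'$. After re-indexing the summation, the interior contribution is combinatorially identical to the computation that yields the ordinary shuffle relation for multiple polylogarithms, hence equals $\li_{\bk\sh\bk'}(T)$. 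This step is essentially formal once the partial fractions are in place; the only point requiring attention is that the ranges $0<l_i,l'_j<p$ match those of the new summation variables, and any discrepancy there is a ``boundary'' term of the kind treated next.

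Finally I would treat the boundary terms, i.e.\ the terms discarded above because some partial sum of the $a_i$'s and $b_j$'s, in particular $a_r+b_{r'}$, is $\equiv 0 \pmod p$. On such a term the exponent of $T$ is a multiple $jp$ of $p$, so $T^{a_r+b_{r'}}=(T^p)^j$, and a congruence such as $b_{r'}\equiv -a_r \pmod p$ merges two denominators into one; what remains is an explicit finite sum, factoring through finite multiple polylogarithms of smaller weight and finite-multiple-zeta-type sums, times a power of $T^p$. I would show that each such term lies in $\cR_{k+k',\,k+k'-1}$ and can be written down explicitly from $\li_\bk(T)$ and $\li_{\bk'}(T)$; a good many of these contributions vanish outright in $\cB$, since for every fixed integer $n\ge 1$ the element $\bigl(\sum_{0<l<p}l^{-n}\bigr)_p$ is zero in $\cB$ by a Wolstenholme-type congruence. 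Combining the interior and boundary parts proves the congruence, and the explicitness of every step gives the ``Moreover''. I expect the main obstacle to be exactly this last step: pinning down precisely which sub-sums the partial fraction decomposition removes and re-expressing them, uniformly in $p$, as members of the prescribed remainder space.
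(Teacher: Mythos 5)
Your outline follows the same route as the paper: write the product as one double sum, apply the Komori--Matsumoto--Tsumura partial fraction identity, read off the shuffle from the ``interior'' locus where all merged partial sums are prime to $p$, and push the $p$-divisible ``boundary'' locus into $\cR_{k+k',k+k'-1}$. But there is a genuine gap exactly at the step you yourself flag as the main obstacle, and it is not a routine verification. When a merged partial sum is divisible by $p$, say equal to $ip$, the factor $T^{ip}=(T^p)^i$ splits off and the remaining sum is of the shape
$$
\sideset{}{'}\sum_{\substack{0<l_1,\ldots,l_r<p \\ (i-1)p<l_1+\cdots+l_r<ip}}\frac{1}{(l_1)^{k_1}(l_1+l_2)^{k_2}\cdots(l_1+\cdots+l_r)^{k_r}},
$$
i.e.\ a truncated harmonic sum with the extra window condition $(i-1)p<L_r<ip$. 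This is \emph{not} a finite multiple zeta value, while membership in $\cR_{k+k',k+k'-1}$ demands, by its very definition, coefficients that are $\bbQ$-combinations of genuine $\zeta_\cA(\bk)$ of the prescribed weight. Closing this gap is the content of the paper's Section 2: one introduces the variants $\zeta^{(i)}_\cA$ and proves (via a nontrivial combinatorial bijection between the summation region and pairs $(\phi,(A_1<\cdots<A_s))$, where $\phi:[r]\to[s]$ is surjective with $\phi(a)\neq\phi(a+1)$) that each $\zeta^{(i)}_\cA(\bk)$ is a $\bbQ$-linear combination of FMZVs of the same weight. Your Wolstenholme-type observation that $\sum_{0<l<p}l^{-n}$ vanishes in $\cB$ only disposes of depth-one sums and does not touch these higher-depth windowed sums, so as written the boundary terms are not shown to lie in the remainder space.

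A secondary point: the partial fraction identity merges only two denominators at a time, so the reduction to shuffles cannot be done in ``one formal step''; it must be iterated, and at every stage of the iteration new $p$-divisibility conditions appear (not only for $a_r+b_{r'}$ but for each newly created mixed partial sum, and also for the top remaining partial sums of each block). The paper handles this by introducing the three-block object $\li(\blambda,\bmu,\bnu;T)$, proving a one-step recursion for it (Proposition \ref{key prop}) in which all boundary contributions are identified in terms of $\zeta^{(i)}_\cA$, and then running an induction on $\dep(\blambda)+\dep(\bmu)$ exactly as in KMT. Your sketch would become a proof once you (i) set up such a recursion-friendly intermediate object and (ii) supply the variant-to-FMZV reduction; without (ii) in particular, the congruence modulo $\cR_{k+k',k+k'-1}$ and the ``moreover'' clause are not established.
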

Here, $\li_{\bk \sh \bk'}(T)$ is the finite sum of FMPs corresponding to $\bk \sh \bk'$. 
See Definition \ref{shuffle product} for the definition of FMP corresponding to $\bk \sh \bk'$.
$\cR_{a, b}$ is a certain $\bbQ$-vector subspace of $\cB$ defined in Definition \ref{filtration}.

One of our motivations is to generalize the studies on the finite analogue of polylogarithms 
by Kontsevich \cite{Kon}, Elbaz-Vincent and Gangl \cite{EG} and Besser \cite{Bes} to the multiple case.
Their finite polylogarithm is defined as
$$
\cL_k(T):=\sum_{0<n<p}\frac{T^n}{n^k} \in \bbF_p[T]
$$
for each prime $p$. 
The idea to work in the ring $\cB$ is inspired by the definition of finite multiple zeta values introduced by Kaneko and Zagier \cite{KZ}:

\begin{definition}
\begin{enumerate}
\renewcommand{\labelenumi}{(\roman{enumi})}
\item A $\bbQ$-algebra $\cA$ is defined as
$$
\cA:=\left.\Biggl(\prod_{p;\text{prime}}\bbF_p\Biggr)\right/\Biggr(\bigoplus_{p;\text{prime}}\bbF_p\Biggl).
$$
As in the case of $\cB$, we omit $(\ )_p$ to represent an element of $\cA$.
\item
For positive integer $r$ and $\bk=(k_1, \ldots, k_r) \in (\bbZ_{\geq1})^r$,
we define the finite multiple zeta value (FMZV) $\zeta_\cA(\bk) \in \cA$ by
\begin{align*}
\zeta_\cA(\bk)=\zeta_\cA(k_1, \ldots, k_r):=
  \begin{cases}
  \displaystyle\sum_{\substack{l_1, \ldots, l_r \in \bbZ_{\geq1} \\ l_1+\cdots+l_r<p}}\frac{1}{l^{k_1}_1\cdots (l_1+\cdots+l_r)^{k_r}} & (r \geq1), \\
  1 & (r=0).
  \end{cases}
\end{align*}
\end{enumerate}
\end{definition}

Taking the usual complex case into account, 
finite analogues of multiple polylogarithms should coincide with the corresponding FMZVs at $T=1$. 
Unfortunately, our definition of FMPs does not give FMZVs at $T=1$ (conjecturally, at least).
Indeed, we can prove that $\li_\bk(1)=0$, so our definition is unsatisfactory in this sense.
It is important, however, that our FMPs satisfy the shuffle relation.
We hope that the study in this paper will be a base to find a better definition.

The contents of this paper are as follows. 
In section 2, we will introduce a variant $\zeta^{(i)}_\cA(k_1, \ldots, k_r)$ of FMZVs  
and prove that $\zeta^{(i)}_\cA(k_1, \ldots, k_r)$ is expressed as a sum of FMZVs of the same weight.
In section 3, we will prove the main theorem (Theorem \ref{MainTheorem}).
In the proof, we use the method of partial fraction decomposition (Lemma \ref{PFD}) due to [KMT].
In section 4, we will give an algebraic interpretation of our main theorem using the ``stuffle-shuffle" algebra.

{\ }\\
$\underline{\textbf{Notation}}$\\
For non negative integers $a, b, c$ and $l_1, \ldots, l_a , m_1, \ldots, m_b, n_1, \ldots, n_c$,
we set new positive integers $L_i, M_j, N_k$:
\begin{equation} \label{eq:1}
\begin{cases}
 L_i:=l_1+\cdots+l_i   &(1 \leq i \leq a), \\
 M_j:=m_1+\cdots+m_j  &(1 \leq j \leq b), \\
 N_k:=n_1+\cdots+n_k & (1\leq k \leq c).
\end{cases}
\end{equation}
Let $I$ denote the following set: 
$$
I=\coprod_{r\in \bbZ_{\geq0}}I_r, \quad I_r:=(\bbZ_{\geq1})^r.
$$
An element in $I$ is called an index. For an index $\bk=(k_1, \ldots, k_r)$, 
the integers $k_1+\cdots+k_r$ and $r$ are called the weight of $\bk$ and depth of $\bk$ respectively,
and denoted by $\wt(\bk)$ and $\dep(\bk)$.
The unique index in $I_0$ is denoted by $\emptyset$, for which we understand $\wt(\emptyset)=\dep(\emptyset)=0$.

For two indices $\boldsymbol\lambda=(\lambda_1, \ldots, \lambda_a), \boldsymbol\mu=(\mu_1, \ldots, \mu_b) \in I$, 
we define two new indices $\boldsymbol\lambda \bullet \boldsymbol\mu$ and $\boldsymbol\lambda \star \boldsymbol\mu$ by
$$
\boldsymbol\lambda \bullet \boldsymbol\mu:=(\lambda_1, \ldots, \lambda_a, \mu_1, \ldots, \mu_b), \; 
\boldsymbol\lambda \star \boldsymbol\mu:=(\lambda_1, \ldots, \lambda_{a-1}, \lambda_a+\mu_b, \mu_{b-1}, \ldots, \mu_1).
$$
For a positive integer $r$, we set $[r]:=\{1, \ldots, r\}$.
Finally, for a non negative integer $k$, we define the $\bbQ$-vector subspace $\cZ_{\cA, k}$ of $\cA$ by
$$
\cZ_{\cA, k}:=
\langle \zeta_\cA(\bk) \mid \bk \in I, \wt(\bk)=k \rangle_\bbQ 
$$
and set $\cZ_\cA:=\sum_{k=0}^\infty \cZ_{\cA, k}$.
Note that $\cZ_\cA$ forms a $\bbQ$-algebra by the stuffle relation of FMZVs.

\section{A variant of finite multiple zeta values}

In this section, we define a variant $\zeta^{(i)}_\cA$ of FMZVs.
This variant turns out to be a sum of FMZVs (Proposition \ref{variant}), 
and will play an important role in the proof of the main theorem.

\begin{definition} \label{variant of FMZV}
For an index $\bk=(k_1, \ldots, k_r)$ in $I$ and $1 \leq i \leq r$, we define a variant of FMZVs as an element in $\cA$ by
$$
\zeta^{(i)}_\cA(\bk)
:=\sideset{}{'}\sum_{\substack{0<l_1, \ldots, l_r <p \\ (i-1)p< L_r < ip}}
\frac{1}{L^{k_1}_1\cdots L^{k_r}_r}.
$$
\end{definition}

Note that $\zeta^{(1)}_\cA(\bk)$ coincides with $\zeta_\cA(\bk)$ by definition.

\begin{remark}
By setting $l'_i:=l_i-p$, we see that $\zeta^{(r)}_\cA(\bk) = (-1)^{\wt(\bk)}\zeta_\cA(\bk)$.
\end{remark}

To explain that $\zeta^{(i)}_\cA$ can be expressed as a sum of $\zeta_\cA$'s, 
we introduce more notation.
For a positive integers $r$ and $s$, set
$$
X_r:=\{(l_1, \ldots, l_r) \in [p-1]^r \mid (L_1, p)=(L_2, p)=\cdots=(L_r, p)=1\},
$$
$$
\Phi_r:=\bigsqcup_{s=1}^r\Phi_{r,s},\quad \Phi_{r,s}:=\{\phi : [r]\rightarrow [s] : \text{surjective}\mid \phi(a)\neq \phi(a+1) \text{ for all } a \in [r-1]\},
$$
$$
Y_s := \{(A_1, \ldots, A_s) \in [p-1]^s \mid 0<A_1<\cdots<A_s<p\}
$$
and we define an integer $\delta_\phi(i)$ by 
\begin{equation} \label{eq:2}
\delta_\phi(i):=\#\{a \in [i-1] \mid \phi(a)>\phi(a+1)\} \quad (1\leq i \leq r)
\end{equation}
for $\phi \in \Phi_r$.
Next, for $x \in X_r$, there exist 
$$
s \in [r], \quad \phi=\phi_x \in \Phi_{r, s}, \quad(A_1, \ldots, A_s) \in Y_s
$$
uniquely satisfying that $l_1+\cdots+l_i \equiv A_{\phi(i)} \pmod{p}$ for $i=1, \ldots, r$.
Moreover, using $s, \phi$ and $(A_1, \ldots, A_s)$ above, 
we define the map $f : X_r \rightarrow \bigsqcup_{s=1}^r(\Phi_{r,s} \times Y_s)$ by sending $x \in X_r$ to $(\phi, (A_1, \ldots, A_s))$.

\begin{lemma} \label{surjective}
$f$ is a bijection.
\end{lemma}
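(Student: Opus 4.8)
The plan is to construct an explicit inverse map $g : \bigsqcup_{s=1}^r(\Phi_{r,s} \times Y_s) \to X_r$ and check that $f$ and $g$ are mutually inverse. Given a pair $(\phi, (A_1, \ldots, A_s))$ with $\phi \in \Phi_{r,s}$ and $(A_1, \ldots, A_s) \in Y_s$, I would first define, for each $i \in [r]$, the residue $r_i \in \{1, \ldots, p-1\}$ to be the representative of $A_{\phi(i)} \bmod p$ (which is just $A_{\phi(i)}$ itself, since $0 < A_{\phi(i)} < p$), and then set $l_i := r_i - r_{i-1} \bmod p$ with the convention $r_0 = 0$, choosing the representative in $\{1, \ldots, p-1\}$. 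The condition $\phi(a) \neq \phi(a+1)$ guarantees $A_{\phi(a)} \neq A_{\phi(a+1)}$, so each $l_i$ is a well-defined element of $[p-1]$; this defines $g(\phi, (A_1, \ldots, A_s)) := (l_1, \ldots, l_r)$. One then checks $L_i = l_1 + \cdots + l_i \equiv r_i = A_{\phi(i)} \pmod p$ by telescoping, which in particular shows $(L_i, p) = 1$ for all $i$, so $g$ indeed lands in $X_r$.

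Next I would verify $f \circ g = \mathrm{id}$. Starting from $(\phi, (A_1, \ldots, A_s))$ and applying $g$ to get $x = (l_1, \ldots, l_r) \in X_r$, the uniqueness clause in the definition of $f$ (stated just before the lemma) forces $\phi_x = \phi$ and the associated tuple in $Y_s$ to be $(A_1, \ldots, A_s)$, because $\phi$ together with $(A_1, \ldots, A_s)$ already satisfies $L_i \equiv A_{\phi(i)} \pmod p$ for all $i$ — precisely the defining property of $f(x)$. Here I should make sure the normalization conventions match: the surjectivity of $\phi$ ensures every $A_j$ actually appears as some $L_i \bmod p$, and the strict inequalities $0 < A_1 < \cdots < A_s < p$ pin down the labeling, so no ambiguity remains. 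For the other direction $g \circ f = \mathrm{id}$: given $x = (l_1, \ldots, l_r) \in X_r$, let $(\phi, (A_1, \ldots, A_s)) = f(x)$; then $L_i \equiv A_{\phi(i)} \pmod p$ and $L_i \in [p-1]$ (since $(L_i, p) = 1$ and we may reduce, but actually $L_i$ as an integer can exceed $p$ — here one works with $L_i \bmod p$). Applying $g$ recovers $l_i$ as the difference $A_{\phi(i)} - A_{\phi(i-1)} \bmod p$, which equals $l_i$ because $l_i \equiv L_i - L_{i-1} \equiv A_{\phi(i)} - A_{\phi(i-1)} \pmod p$ and $l_i \in [p-1]$.

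The main obstacle is bookkeeping around the distinction between $L_i$ as a genuine integer (which may be as large as $r(p-1)$) and its residue $L_i \bmod p$: the map $f$ only sees residues, so I must be careful that the "uniqueness" statement preceding the lemma is really a statement about residues, and that $g$ reconstructs the $l_i$ from consecutive residue differences rather than from the $L_i$ themselves. A secondary point to handle cleanly is that $\delta_\phi$ plays no role in the bijection itself (it is only needed later in Proposition \ref{variant}), so I would not mention it here. Once the residue/integer conventions are fixed, both composites reduce to the telescoping identity $L_i - L_{i-1} = l_i$ modulo $p$, and the surjectivity-plus-monotonicity conditions guarantee uniqueness of the $Y_s$-labeling; the verification is then routine.
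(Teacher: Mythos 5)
Your proof is correct and takes essentially the same route as the paper: both arguments construct the explicit inverse $g$ by recovering each $l_i$ from consecutive differences of the $A_{\phi(i)}$ (your representative of $A_{\phi(i)}-A_{\phi(i-1)}$ in $[p-1]$ is exactly the paper's $(A_{\phi(i)}+\delta_\phi(i)p)-(A_{\phi(i-1)}+\delta_\phi(i-1)p)$), and then verify the two composites via telescoping and the uniqueness clause in the definition of $f$. The only difference is cosmetic: the paper's formulation with $\delta_\phi$ also records the exact integer identity $L_i=A_{\phi(i)}+\delta_\phi(i)p$, which is what later justifies \eqref{eq:5}, but for the bijection itself your residue bookkeeping is equivalent.
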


\begin{proof}
We construct the inverse map of $f$.
For any $\phi \in \Phi_{r,s}, (A_1, \ldots, A_s) \in Y_s$ and $i \in [r]$, we define an integer $l_i$ as follows:
         \begin{equation} \label{eq:3}
         l_i:=
         \begin{cases}
         A_{\phi(1)} & (i=1), \\
         (A_{\phi(i)}+\delta_\phi(i)p)-(A_{\phi(i-1)}+\delta_\phi(i-1)p) & (2\leq i \leq r).
         \end{cases}
         \end{equation}
         Since
         \begin{equation} \label{eq:4}
         \delta_\phi(i)-\delta_\phi(i-1)=
         \begin{cases}
         0 & \text{if $\phi(i-1)<\phi(i)$}, \\
         1 & \text{if $\phi(i-1)>\phi(i)$}, 
         \end{cases}
         \end{equation}
         we have $0<l_i<p$ for all $i$. Moreover, since
         \begin{align*}
         l_1+\cdots+l_i=&A_{\phi(1)} + (A_{\phi(2)}+\delta_\phi(2)p)-A_{\phi(1)} \\
                         &+(A_{\phi(3)}+\delta_\phi(3)p)-(A_{\phi(2)}+\delta_\phi(2)p) \\
                         &+\cdots +(A_{\phi(i)}+\delta_\phi(i)p) -(A_{\phi(i-1)}+\delta_\phi(i-1)p)\\
                       =&A_{\phi(i)}+\delta_\phi(i)p,
         \end{align*}
         the remainder of $l_1+\cdots+l_i$ modulo $p$ is $A_{\phi(i)}$ and $l_1+\cdots+l_i$ is prime to $p$.
         Thus we obtain a map $g : \bigsqcup_{s=1}^r(\Phi_{r,s} \times Y_s) \rightarrow X_r$ by $g(\phi, (A_1, \ldots, A_s))=(l_1, \ldots, l_r)$.
         We can easily prove that $g$ is the inverse of $f$ by \eqref{eq:2} and \eqref{eq:3}. 
\end{proof}

Next we define two maps
$$
\alpha : X_r \rightarrow [r], \quad \beta : \Phi_r \rightarrow [r]
$$
as follows:
$$
\text{$\alpha(l_1, \ldots, l_r)$ is defined to be the unique integer $n$ satisfying  $(n-1)p<l_1+\cdots+l_r<np$},
$$
$$
\beta(\phi):=\delta_\phi(r)+1.
$$
Using $\alpha$ and $\beta$, for $1 \leq i\leq r$, we set
$$
\text{$X^i_r:=\alpha^{-1}(i)$, \quad $\Phi^i_r:=\beta^{-1}(i)$},
$$
and $X_\phi:=\{x \in X_r \mid \phi_x=\phi\}$ for $\phi \in \Phi_r$. Then we have
\begin{equation} \label{eq:5}
X^i_r=\bigsqcup_{\phi \in \Phi^i_r}X_\phi
\end{equation}
for $1\leq i \leq r$. Further, for $\phi \in \Phi^i_{r, s}:=\Phi^i_r \cap \Phi_{r, s}$,
the composition 
\begin{equation} \label{eq:6}
X_\phi \xrightarrow{f} \{\phi\} \times Y_s \xrightarrow{\pr_2} Y_s
\end{equation}
is a bijection.

The following is the main result in this section.

\begin{proposition} \label{variant}
For $1 \leq i \leq r$ and an index $\bk=(k_1, \dots, k_r)$ in $I_r$, we have
$$
\zeta^{(i)}_\cA(\bk)=\sum_{\phi \in \Phi^i_r}\zeta_\cA\left(\sum_{\phi(j)=1}k_j, \ldots, \sum_{\phi(j)=s}k_j \right).
$$
\end{proposition}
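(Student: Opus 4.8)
The plan is to unfold the definition of $\zeta^{(i)}_\cA(\bk)$ as a sum over the index set $X_r$ and then use the bijection $f$ of Lemma \ref{surjective} together with the decomposition \eqref{eq:5} to split this sum according to the combinatorial type $\phi$ of each summation tuple. Concretely, for $x=(l_1,\dots,l_r)\in X_r$ the condition $(i-1)p<L_r<ip$ appearing in $\zeta^{(i)}_\cA(\bk)$ is exactly the condition $\alpha(x)=i$, i.e. $x\in X^i_r$. Hence
$$
\zeta^{(i)}_\cA(\bk)=\sum_{x\in X^i_r}\frac{1}{L_1^{k_1}\cdots L_r^{k_r}}
=\sum_{\phi\in\Phi^i_r}\ \sum_{x\in X_\phi}\frac{1}{L_1^{k_1}\cdots L_r^{k_r}},
$$
where the second equality is \eqref{eq:5}. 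So the proposition reduces to the single claim that, for each fixed $\phi\in\Phi^i_{r,s}$,
$$
\sum_{x\in X_\phi}\frac{1}{L_1^{k_1}\cdots L_r^{k_r}}=\zeta_\cA\Bigl(\textstyle\sum_{\phi(j)=1}k_j,\ \ldots,\ \sum_{\phi(j)=s}k_j\Bigr)
$$
in $\cA$.

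Next I would prove this last identity using the bijection \eqref{eq:6}, namely $X_\phi\xrightarrow{\ \sim\ }Y_s$ sending $x$ to the associated tuple $(A_1,\dots,A_s)$ with $0<A_1<\cdots<A_s<p$. The key point is that, by the computation in the proof of Lemma \ref{surjective}, one has $L_j=l_1+\cdots+l_j\equiv A_{\phi(j)}\pmod p$, and therefore in $\bbF_p$ the summand transforms as
$$
\frac{1}{L_1^{k_1}\cdots L_r^{k_r}}=\prod_{t=1}^{s}\frac{1}{A_t^{\,\sum_{\phi(j)=t}k_j}}.
$$
Renaming $A_t$ as running over $s$-tuples with $0<A_1<\cdots<A_s<p$, the sum over $X_\phi$ becomes exactly $\sum_{0<A_1<\cdots<A_s<p}A_1^{-K_1}\cdots A_s^{-K_s}$ with $K_t:=\sum_{\phi(j)=t}k_j$, which is the definition of $\zeta_\cA(K_1,\dots,K_s)$ componentwise; passing to $\cA$ gives the claimed equality. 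Summing over $\phi\in\Phi^i_r$ then yields the proposition.

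The only genuine subtlety — and the step I would treat most carefully — is that the congruence $L_j\equiv A_{\phi(j)}\pmod p$ is precisely what lets us replace the $L_j$ (which can exceed $p$) by the reduced residues $A_{\phi(j)}$ inside the fraction; this is legitimate because each $L_j$ is prime to $p$ by the definition of $X_r$, so it is a unit in $\bbF_p$ and its inverse depends only on its residue class. Everything else is bookkeeping: matching the index set $\{1,\dots,s\}$ of the $A$'s with the blocks $\phi^{-1}(t)$ of $[r]$, checking that $\beta(\phi)=\delta_\phi(r)+1=i$ encodes the constraint $\alpha(x)=i$ (so that $X_\phi\subset X^i_r$ exactly when $\phi\in\Phi^i_r$), and invoking \eqref{eq:5} and \eqref{eq:6}, all of which are already established in the excerpt. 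No limiting or analytic input is needed; the argument is a finite reindexing carried out uniformly in $p$ and then interpreted in the quotient ring $\cA$.
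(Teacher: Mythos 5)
Your proposal is correct and follows essentially the same route as the paper's proof: unfolding $\zeta^{(i)}_\cA(\bk)$ over $X^i_r$, splitting by $\phi$ via \eqref{eq:5}, transporting the sum over $X_\phi$ to $Y_s$ via \eqref{eq:6}, and using the congruence $L_j\equiv A_{\phi(j)}\pmod p$ (legitimate since each $L_j$ is prime to $p$) to regroup the exponents into the blocks $\phi^{-1}(t)$. The subtlety you flag is exactly the point on which the paper's computation also rests, so nothing further is needed.
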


\begin{proof}
By the definition of $X^i_r$, \eqref{eq:5} and \eqref{eq:6}, we obtain 
\begin{align*}
\zeta^{(i)}_\cA(k_1, \ldots, k_r)
&=\sum_{(l_1, \ldots, l_r) \in X^i_r}\frac{1}{l^{k_1}_1(l_1+l_2)^{k_2} \cdots (l_1+\cdots+l_r)^{k_r}} \; (\text{definition of $X^i_r$}) \\
&=\sum_{\phi \in \Phi^i_r}\sum_{(l_1, \ldots, l_r) \in X_{\phi }}\frac{1}{l^{k_1}_1(l_1+l_2)^{k_2} \cdots (l_1+\cdots+l_r)^{k_r}} \; (\text{by \eqref{eq:5}})\\
&=\sum_{\phi \in \Phi^i_r}\sum_{0<A_1<\cdots <A_s<p}\frac{1}{A^{k_1}_{\phi (1)} \cdots A^{k_r}_{\phi (r)}} \; (\text{by \eqref{eq:6}})\\
&=\sum_{\phi \in \Phi^i_r}\sum_{0<A_1<\cdots <A_s<p}\frac{1}{A^{\sum_{\phi (j)=1}k_j}_1 \cdots A^{\sum_{\phi (j)=s}k_j}_s}\\
&=\sum_{\phi \in \Phi^i_r}\zeta_\cA\left(\sum_{\phi(j)=1}k_j, \ldots, \sum_{\phi(j)=s}k_j \right). \qedhere
\end{align*}
\end{proof}

\begin{example}
We present non-trivial examples of $\zeta^{(i)}_{\cA}(k_1, \ldots, k_r)$ for $r=3, 4$.
\begin{enumerate}

\item $\zeta^{(2)}_{\cA}(k_1, k_2, k_3)
         =\zeta_\cA(k_3, k_1, k_2)+\zeta_\cA(k_1, k_3, k_2)+\zeta_\cA(k_2, k_3, k_1)+\zeta_\cA(k_2, k_1, k_3)
            + \zeta_\cA(k_1+k_3, k_2)+\zeta_\cA(k_2, k_1+k_3)$.
\item 
         \begin{align*}
           &\zeta^{(2)}_{\cA}(k_1, k_2, k_3, k_4) \\
         =&\zeta_\cA(k_4, k_1, k_2, k_3)+\zeta_\cA(k_3, k_4, k_1, k_2)+\zeta_\cA(k_2, k_3, k_4, k_1)+\zeta_\cA(k_1, k_4, k_2, k_3) \\
           &+\zeta_\cA(k_3, k_1, k_4, k_2)+\zeta_\cA(k_2, k_3, k_1, k_4)+\zeta_\cA(k_1, k_2, k_4, k_3)+\zeta_\cA(k_3, k_1, k_2, k_4) \\
           &+\zeta_\cA(k_2, k_1, k_3, k_4)+\zeta_\cA(k_1, k_3, k_4, k_2)+\zeta_\cA(k_1, k_3, k_2, k_4) + \zeta_\cA(k_1+k_3, k_2, k_4) \\
           &+\zeta_\cA(k_2, k_1+k_3, k_4)+\zeta_\cA(k_1+k_3, k_4, k_2)+\zeta_\cA(k_1+k_4, k_2, k_3)+\zeta_\cA(k_3, k_1+k_4, k_2) \\
           &+\zeta_{\cA}(k_2, k_3, k_1+k_4)+\zeta_\cA(k_1, k_2+k_4, k_3)+\zeta_\cA(k_3, k_1, k_2+k_4)+\zeta_\cA(k_1, k_3, k_2+k_4)\\
           &+\zeta_\cA(k_1+k_3, k_2+k_4).
         \end{align*}
         \begin{align*}
           &\zeta^{(3)}_{\cA}(k_1, k_2, k_3, k_4) \\
         =&\zeta_\cA(k_3, k_2, k_1, k_4)+\zeta_\cA(k_2, k_1, k_4, k_3)+\zeta_\cA(k_1, k_4, k_3, k_2)+\zeta_\cA(k_3, k_2, k_4, k_1) \\
           &+\zeta_\cA(k_2, k_4, k_1, k_3)+\zeta_\cA(k_4, k_1, k_3, k_2)+\zeta_\cA(k_3, k_4, k_2, k_1)+\zeta_\cA(k_4, k_2, k_1, k_3) \\
           &+\zeta_\cA(k_4, k_3, k_1, k_2)+\zeta_\cA(k_2, k_4, k_3, k_1)+\zeta_\cA(k_4, k_2, k_3, k_1)+\zeta_\cA(k_4, k_2, k_1+k_3) \\
           &+\zeta_\cA(k_4, k_1+k_3, k_2)+\zeta_\cA(k_2, k_4, k_1+k_3)+\zeta_\cA(k_3, k_2, k_1+k_4)+\zeta_\cA(k_2, k_1+k_4, k_3) \\
           &+\zeta_{\cA}( k_1+k_4, k_3, k_2)+\zeta_\cA(k_3, k_2+k_4, k_1)+\zeta_\cA(k_2+k_4, k_1, k_3)+\zeta_\cA(k_2+k_4, k_3, k_1)\\
           &+\zeta_\cA(k_2+k_4, k_1+k_3).
         \end{align*}
         
\end{enumerate}

\end{example}

\section{Proof of the main theorem}

In this section, we define FMPs $\li(\blambda, \bmu, \bnu; T)$ of type $(\blambda, \bmu, \bnu)$ and prove the main theorem using a method inspired by [KMT]. 
 $\li(\blambda, \bmu, \bnu; T)$ is a generalization of both FMPs and products of two FMPs.

\begin{definition}
For indices $\blambda = (\lambda_1, \ldots, \lambda_a), \bmu =(\mu_1, \ldots, \mu_b)$ and $\bnu=(\nu_1, \ldots, \nu_c)$ in $I$ ($a, b, c \in \bbZ_{\geq0}$), 
we define a FMP of type $(\blambda, \bmu, \bnu)$ by
$$
\li(\blambda, \bmu, \bnu; T)
:=\sideset{}{'}\sum_{\substack{0<l_1, \ldots, l_a <p \\ 0<m_1, \ldots, m_b <p \\0<n_1, \ldots, n_c <p}}
\frac{T^{L_a+M_b+N_c}}{
\displaystyle\prod_{x=1}^a L^{\lambda_x}_x
\displaystyle\prod_{y=1}^b M^{\mu_y}_y
\displaystyle\prod_{z=1}^c \left(L_a+M_b+N_z\right)^{\nu_z}
}
\in \cB.
$$
Recall the definitions of $L_x, M_y$ and $N_z$ defined in \eqref{eq:1}.
\end{definition}

\begin{remark}
By the definition of $\li(\blambda, \bmu, \bnu; T)$, we have
$$
\li(\blambda, \emptyset, \emptyset; T)=\li(\emptyset, \blambda, \emptyset; T)=\li(\emptyset, \emptyset, \blambda; T)=\li_{\blambda}(T), 
$$
$$
\li(\blambda, \emptyset, \bmu; T)=\li(\emptyset, \blambda, \bmu; T)
=\li_{\blambda \bullet \bmu}(T), 
$$
and
$$
\li(\blambda, \bmu, \emptyset; T)=\li_{\blambda}(T)\li_{\bmu}(T).
$$
\end{remark}

Next, to state our main theorem, we introduce $\cZ_\cA[T^p]$-submodule
$$
\cR:=\sum_{\bk \in I} \cZ_\cA[T^p] \li_{\bk}(T) 
$$
of $\cB$, and $\bbQ$-subspaces $\cR\supset \cR_a \supset \cR_{a, b}$ as follows. 

\begin{definition} \label{filtration}
For a non-negative integer $a$, we define a $\bbQ$-subspace  
$$
\cR_a:=\langle \zeta_\cA(\bk)\cdot(T^p)^n\cdot\li_{\bk'}(T) \mid n \in \bbZ_{\geq0}, \wt(\bk)+\wt(\bk')=a \rangle_\bbQ
$$
of $\cR$. Then we have $\cR=\sum_{a=0}^\infty \cR_a$.
Moreover, for non-negative integers $a$ and $b \in \{0, \ldots, a\}$, we define a $\bbQ$-subspace
$$
\cR_{a, b}:=\langle \zeta_\cA(\bk)\cdot(T^p)^n\cdot\li_{\bk'}(T) \mid n \in \bbZ_{\geq0}, \wt(\bk)+\wt(\bk')=a, \wt(\bk')\leq b\rangle_\bbQ
$$
of $\cR_a$. Then we have an increasing filtration
$$
\cR_{a, 0} \subset \cR_{a, 1} \subset \cdots \subset \cR_{a, a-1} \subset \cR_{a, a} =\cR_a
$$
of $\bbQ$-vector spaces.
\end{definition}

To define the shuffle product $\bk \sh \bk'$ of indices $\bk$ and $\bk'$, we prepare some terminologies.
Let $\frH$ be the noncommutative polynomial ring $\bbQ \langle x, y \rangle$ of variables $x$ and $y$ over $\bbQ$, 
and we set
$$
\frH^1:=\bbQ + \frH y, \; \frH^0:=\bbQ + x\frH y.
$$
Note that $\frH^1$ is generated by $z_k:=x^{k-1}y \; (k=1, 2, \cdots)$ as a $\bbQ$-algebra.

\begin{definition} \label{shuffle algebra}
We define the shuffle product $\sh : \frH \times \frH \rightarrow \frH$ on $\frH$ by the following rule and $\bbQ$-bilinearity.

\begin{enumerate}
\renewcommand{\labelenumi}{(\roman{enumi})}

\item $w \sh 1=1 \sh w =w$ for all $w \in \frH$.

\item $(u_1w_1)\sh(u_2w_2)=u_1(w_1\sh u_2w_2)+u_2(u_1w_1\sh w_2)$ for all $w_1, w_2 \in \frH$ and $u_1, u_2 \in \{x, y\}$.

\end{enumerate}

For instance, we have
$$
z_2 \sh z_3=z_2z_3+3z_3z_2+6z_4z_1.
$$
\end{definition}

\begin{definition} \label{shuffle product}
For $\bk=(k_1, \ldots, k_r) \in I$, we set $z_{\bk}:=z_{k_1}\cdots z_{k_r}$.
We define the shuffle product $\bk\sh\bk'$ of indices $\bk$ and $\bk'$ as the formal sum of indices 
corresponding to $z_{\bk} \sh z_{\bk'}$. For instance, 
$$
(2)\sh(3)=(2, 3)+3(3, 2)+6(4,1).
$$
\end{definition}

Further, for indices $\bk$ and $\bk'$, we define $\Li_{\bk \sh \bk'}(T)$ as the finite linear sum of multiple polylogarithms with indices corresponding to $\bk \sh \bk'$.
For instance, since $(2)\sh(3)=(2,3)+3(3,2)+6(4,1)$, 
$$
\Li_{(2)\sh(3)}(T):=\Li_{2,3}(T)+3\Li_{3,2}(T)+6\Li_{4,1}(T).
$$
Also $\li_{\bk \sh \bk'}(T)$ is defined in the same way.

The main theorem of this paper is the following.

\begin{theorem} \label{MainTheorem}
For indices $\bk=(k_1, \ldots, k_r), \bk'=(k'_1, \ldots, k'_{r'})$ in $I$ with $k:=\wt(\bk)$ and $k':=\wt(\bk')$, 
we have 
$$
\li_\bk(T)\li_{\bk'}(T) \equiv \li_{\bk \sh \bk'}(T) \pmod{\cR_{k+k', k+k'-1}}.
$$
\end{theorem}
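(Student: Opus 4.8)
The plan is to deduce Theorem \ref{MainTheorem} from a single partial fraction identity for the auxiliary polylogarithms $\li(\blambda,\bmu,\bnu;T)$, iterated until one of the two chains $\blambda,\bmu$ becomes empty. The starting point is the identity $\li_\bk(T)\li_{\bk'}(T)=\li(\bk,\bk',\emptyset;T)$ recorded just after the definition of the three-chain polylogarithm, together with $\li(\blambda,\emptyset,\bnu;T)=\li_{\blambda\bullet\bnu}(T)$ and $\li(\emptyset,\bmu,\bnu;T)=\li_{\bmu\bullet\bnu}(T)$: the FMPs appearing on the right of the theorem are exactly the ones reached once $\blambda$ or $\bmu$ is the empty index, so the game is to transfer all the entries of $\blambda$ and $\bmu$ into $\bnu$.

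The first step is a reduction lemma of the following shape: for $\dep(\blambda)=a\ge1$, $\dep(\bmu)=b\ge1$ and $w:=\wt(\blambda)+\wt(\bmu)+\wt(\bnu)$,
\begin{align*}
\li(\blambda,\bmu,\bnu;T)\equiv{}&\sum_{j=0}^{\lambda_a-1}\binom{\mu_b-1+j}{j}\,\li\bigl((\lambda_1,\dots,\lambda_{a-1},\lambda_a-j),\,(\mu_1,\dots,\mu_{b-1}),\,(\mu_b+j)\bullet\bnu;\,T\bigr)\\
&{}+(\blambda\leftrightarrow\bmu)\pmod{\cR_{w,\,w-1}},
\end{align*}
where $(\blambda\leftrightarrow\bmu)$ denotes the same sum with the roles of $\blambda$ and $\bmu$ exchanged. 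The weight $w$ is preserved while $\dep(\blambda)+\dep(\bmu)$ strictly decreases, so iterating from $\li(\bk,\bk',\emptyset;T)$ terminates. To prove the lemma I would apply the Komori-Matsumoto-Tsumura partial fraction decomposition (Lemma \ref{PFD}) to the factors $L_a^{\lambda_a}$ and $M_b^{\mu_b}$ of the summand, rewriting them in terms of $L_a$, $M_b$ and $S:=L_a+M_b$; on the part of the summation where $p\nmid S$ this is a legitimate identity in $\cB$, and after relabelling so that $S$ plays the role of the innermost denominator of the $\bnu$-chain it becomes precisely the displayed sum of FMPs. The discrepancy---the part of the original sum where $p\mid S$, and the part of each term on the right where $M_b$ (resp.\ $L_a$), no longer one of the denominator factors, fails to be prime to $p$---is the correction that must be controlled.

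Showing that this correction lies in $\cR_{w,w-1}$, and making it explicit, is the step I expect to be the main obstacle, and it is exactly where Section 2 enters. In each defective contribution the offending combined variable lies in some interval $((i-1)p,ip)$; one extracts the corresponding power of $T^p$ from the monomial $T^{L_a+M_b+N_c}$ and, after subtracting the relevant multiple of $p$ from the remaining denominators, the sum breaks up into a variant $\zeta^{(i)}_\cA$ attached to a configuration strictly shorter than $(\blambda,\bmu,\bnu)$ and an FMP of weight $<w$. By Proposition \ref{variant} and the Remark following Definition \ref{variant of FMZV}, every such $\zeta^{(i)}_\cA$ is a $\bbQ$-linear combination of genuine FMZVs $\zeta_\cA(\bk'')$ with $\wt(\bk'')\ge1$; hence each defective contribution is a $\bbQ$-linear combination of elements $\zeta_\cA(\bk'')\,(T^p)^n\,\li_{\bk'''}(T)$ with $\wt(\bk'')+\wt(\bk''')=w$ and $\wt(\bk''')<w$, i.e.\ an element of $\cR_{w,w-1}$; and since every move is explicit, this simultaneously establishes the ``moreover'' statement of the introduction. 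The delicate bookkeeping is to enumerate correctly the intervals $((i-1)p,ip)$ and the accompanying shifts of the indices, to check that reduction modulo $p$ genuinely produces the variants $\zeta^{(i)}_\cA$ of Definition \ref{variant of FMZV} carrying the correct power of $T^p$, and to verify that the FMZV factor always has strictly positive weight.

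Granting the reduction lemma, Theorem \ref{MainTheorem} follows by induction on $\dep(\bk)+\dep(\bk')$, the case $\bk=\emptyset$ or $\bk'=\emptyset$ being trivial. Iterating the lemma starting from $\li(\bk,\bk',\emptyset;T)$ expresses $\li_\bk(T)\li_{\bk'}(T)$, modulo $\cR_{k+k',\,k+k'-1}$, as a $\bbQ$-linear combination $\sum_\bn c_\bn\li_\bn(T)$ of FMPs with $\wt(\bn)=k+k'$. The coefficients $c_\bn$ are those produced by exactly the binomial bookkeeping underlying the partial fraction proof of the classical shuffle relation $\Li_\bk(T)\Li_{\bk'}(T)=\Li_{\bk\sh\bk'}(T)$: the same purely combinatorial iteration, applied to the error-free complex analogue $\Li(\blambda,\bmu,\bnu;T)$ of the reduction lemma, turns $\Li_\bk(T)\Li_{\bk'}(T)$ into $\sum_\bn c_\bn\Li_\bn(T)$, and comparison with the classical shuffle identity (using linear independence of the $\Li_\bn(T)$) forces $\sum_\bn c_\bn\,\bn=\bk\sh\bk'$. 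Therefore $\sum_\bn c_\bn\li_\bn(T)=\li_{\bk\sh\bk'}(T)$, which is the assertion.
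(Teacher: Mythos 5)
Your argument is correct, and its engine is the same as the paper's: your reduction lemma is exactly Proposition \ref{key prop} (equivalently the congruence \eqref{eq:11}, after using the symmetry of $\li(\blambda,\bmu,\bnu;T)$ in its first two chains), proved the same way --- apply Lemma \ref{PFD} to $L_a^{\lambda_a}M_b^{\mu_b}$ after splitting off the locus $p\mid L_a+M_b$, and absorb the three defective pieces (from $p\mid L_a+M_b$, $p\mid L_a$, $p\mid M_b$) into $\cR_{w,w-1}$ via the variants $\zeta^{(i)}_\cA$ and Proposition \ref{variant}. Where you diverge is the endgame. The paper stays inside the shuffle algebra: it introduces the evaluation map $Z:\frH\to\cR$, $z_\bk\mapsto\li_\bk(T)$, and identifies the iterated binomial recursion with $z_\bk\sh z_{\bk'}$ by the purely combinatorial fact (equation (17) of [KMT]) that the shuffle product satisfies the same recursion, concluding by induction on $\dep(\blambda)+\dep(\bmu)$. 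You instead run the identical, now error-free, recursion on the complex series $\Li(\blambda,\bmu,\bnu;T)$ for $|T|<1$, compare with the classical relation $\Li_\bk(T)\Li_{\bk'}(T)=\Li_{\bk\sh\bk'}(T)$, and extract the coefficients using linear independence of the $\Li_\bn(T)$ over $\bbQ$. This does work, but it purchases a purely combinatorial identification at the price of two external analytic inputs --- the classical shuffle relation and, more seriously, linear independence of one-variable multiple polylogarithms, a true but nontrivial theorem (of Chen/Minh--Petitot type) that you invoke without proof --- whereas the paper's route is formal and self-contained given [KMT]. Note that the detour is avoidable from within your own setup: the coefficients $c_\bn$ produced by your iteration are defined by precisely the recursion characterizing $\sh$ on $\frH^1$, so citing (or proving by a short induction) that recursion yields $\sum_\bn c_\bn\,\bn=\bk\sh\bk'$ directly, with no analysis and no independence statement.
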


To prove Theorem \ref{MainTheorem}, we first show the following proposition.

\begin{proposition} \label{key prop}
Assume that $\blambda, \bmu \neq \emptyset$. Then we have the following equality.
\begin{align} \label{eq:7}
&\li(\blambda, \bmu, \bnu; T)\\
=&\sum_{\tau=0}^{\mu_b-1}\binom{\lambda_a-1+\tau}{\tau}
     \li((\lambda_1, \ldots, \lambda_{a-1}), (\mu_1, \ldots, \mu_{b-1}, \mu_b-\tau), (\lambda_a+\tau) \bullet \boldsymbol{\nu}; T)\nonumber\\
  &+\sum_{\tau=0}^{\lambda_a-1}\binom{\mu_b-1+\tau}{\tau}
     \li((\lambda_1, \ldots, \lambda_{a-1}, \lambda_a-\tau), (\mu_1, \ldots, \mu_{b-1}), (\mu_b+\tau) \bullet \boldsymbol{\nu}; T)\nonumber\\
  &+(-1)^{\wt(\boldsymbol{\mu})}\left(\sum_{i=1}^{a+b-1}
     \zeta^{(i)}_\cA(\blambda \star \bmu)(T^p)^i\right)\li_{\boldsymbol{\nu}}(T)\nonumber\\
  &- \binom{\lambda_a+\mu_b-1}{\lambda_a}
     \left(\sum_{j=1}^{a-1}\zeta^{(j)}_\cA(\lambda_1, \ldots, \lambda_{a-1})(T^p)^j\right)
     \li_{\mu_1, \ldots, \mu_{b-1}, \lambda_a+\mu_b, \boldsymbol{\nu}}(T)\nonumber\\
  &- \binom{\lambda_a+\mu_b-1}{\mu_b}
      \left(\sum_{j=1}^{b-1}\zeta^{(j)}_\cA(\mu_1, \cdots, \mu_{b-1})(T^p)^j\right)
      \li_{\lambda_1, \ldots, \lambda_{a-1}, \lambda_a+\mu_b, \boldsymbol{\nu}}(T)\nonumber.
\end{align}
\end{proposition}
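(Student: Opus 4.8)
The plan is to apply the Komori--Matsumoto--Tsumura partial fraction decomposition (Lemma~\ref{PFD}) to the two ``innermost'' denominators $L_a^{\lambda_a}$ and $M_b^{\mu_b}$ of $\li(\blambda,\bmu,\bnu;T)$, which merges the tails of the $\blambda$-block and the $\bmu$-block through the new denominator $L_a+M_b$. Since $L_a+M_b$ is exactly the quantity occurring inside the $\bnu$-block factors $(L_a+M_b+N_z)^{\nu_z}$, the merge is compatible with the combinatorial shape of FMPs of type $(\cdot,\cdot,\cdot)$. The one subtlety is that the rational identity
$$
\frac{1}{X^{\lambda_a}Y^{\mu_b}}=\sum_{\tau=0}^{\mu_b-1}\binom{\lambda_a-1+\tau}{\tau}\frac{1}{(X+Y)^{\lambda_a+\tau}Y^{\mu_b-\tau}}+\sum_{\tau=0}^{\lambda_a-1}\binom{\mu_b-1+\tau}{\tau}\frac{1}{(X+Y)^{\mu_b+\tau}X^{\lambda_a-\tau}}
$$
descends to a congruence modulo $p$ only when $X+Y$ is prime to $p$. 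So I would first split $\li(\blambda,\bmu,\bnu;T)=P+Q$, where $P$ (resp.\ $Q$) collects the terms of the defining sum with $L_a+M_b$ prime to $p$ (resp.\ $L_a+M_b\equiv0\pmod p$); note that $L_a$ and $M_b$ are always prime to $p$, and that when $L_a+M_b\equiv0$ one has $(L_a+M_b+N_z)^{\nu_z}\equiv N_z^{\nu_z}$, so both parts make sense in $\cB$.

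In $P$, applying the identity with $X=L_a$, $Y=M_b$ and then renaming $l_a$ (resp.\ $m_b$) as the leading variable of an enlarged $\bnu$-block --- so that $(L_a+M_b)^{\lambda_a+\tau}$ (resp.\ $(L_a+M_b)^{\mu_b+\tau}$) becomes the first tail factor --- transforms the partial-fraction summands into FMPs of type $(\cdot,\cdot,\cdot)$, and the outcome is the first two sums on the right-hand side of~\eqref{eq:7}, \emph{except} that these FMPs do not carry the condition that $L_a$ (resp.\ $M_b$) be prime to $p$, whereas $P$ does. Hence $P$ equals those first two sums minus the contributions of the configurations with $L_a\equiv0$ (resp.\ $M_b\equiv0$) modulo $p$. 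For the $L_a\equiv0$ contribution one writes $L_a=tp$ with $1\le t\le a-1$, factors out $(T^p)^t$, recognizes the sum over $l_1,\dots,l_{a-1}$ (now constrained by $(t-1)p<L_{a-1}<tp$) as $\zeta^{(t)}_\cA(\lambda_1,\dots,\lambda_{a-1})$, and --- using $(L_a+M_b)^{\lambda_a+\tau}\equiv M_b^{\lambda_a+\tau}$ to merge exponents --- recognizes the rest as $\li_{\mu_1,\dots,\mu_{b-1},\lambda_a+\mu_b,\bnu}(T)$. As the resulting correction does not depend on $\tau$, summing it against $\binom{\lambda_a-1+\tau}{\tau}$ and using the hockey-stick identity $\sum_{\tau=0}^{\mu_b-1}\binom{\lambda_a-1+\tau}{\tau}=\binom{\lambda_a+\mu_b-1}{\lambda_a}$ recovers exactly the fourth term of~\eqref{eq:7}; the $M_b\equiv0$ contribution gives the fifth term in the same way. (If $a=1$ then $L_a=l_1\not\equiv0$, so that correction is empty, consistent with the empty sum there; likewise for $b=1$.)

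For $Q$, I would write $L_a+M_b=ip$ with $1\le i\le a+b-1$, factor out $(T^p)^i$, and note that the residual sum over $n_1,\dots,n_c$ is $\li_\bnu(T)$. The sum over $l_1,\dots,l_a,m_1,\dots,m_b$ with $L_a+M_b=ip$ is then identified with $(-1)^{\wt(\bmu)}\zeta^{(i)}_\cA(\blambda\star\bmu)$ via the substitution $\tilde l_x=l_x$ ($1\le x\le a$), $\tilde l_{a+j}=m_{b+1-j}$ ($1\le j\le b-1$): under it $\tilde L_{a+j}=ip-M_{b-j}$, the constraint $0<m_1<p$ becomes $(i-1)p<\tilde L_{a+b-1}<ip$, the $p$-integrality conditions on the $L_x,M_y$ correspond to those on the $\tilde L_j$, and the congruences $M_b\equiv-\tilde L_a$, $M_{b-j}\equiv-\tilde L_{a+j}\pmod p$ turn $\prod_xL_x^{\lambda_x}\prod_yM_y^{\mu_y}$ into $(-1)^{\wt(\bmu)}\tilde L_1^{\lambda_1}\cdots\tilde L_{a-1}^{\lambda_{a-1}}\tilde L_a^{\lambda_a+\mu_b}\tilde L_{a+1}^{\mu_{b-1}}\cdots\tilde L_{a+b-1}^{\mu_1}$, i.e.\ into the denominator of $\zeta_\cA(\blambda\star\bmu)$ with partial sums $\tilde L_j$. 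This yields the third term of~\eqref{eq:7}, and assembling the contributions of $P$ and $Q$ proves the proposition.

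I expect the bookkeeping of $p$-integrality conditions to be the main obstacle: one must use the partial fraction identity only where $L_a+M_b$ is prime to $p$, and must check that converting a raw partial-fraction summand to its normal form as an FMP of type $(\cdot,\cdot,\cdot)$ alters the integrality constraints in precisely the controlled way above, so that the discrepancies reassemble into the $\zeta^{(i)}_\cA$-terms. The other delicate point is the index bookkeeping in the $Q$-contribution --- verifying that reversing the $\bmu$-block produces the index $\blambda\star\bmu$ together with the sign $(-1)^{\wt(\bmu)}$, and that the power of $T^p$ ranges exactly over $1\le i\le a+b-1$.
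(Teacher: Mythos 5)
Your proposal is correct and follows essentially the same route as the paper's own proof: the same split of the defining sum according to whether $p\mid L_a+M_b$, the same application of the Komori--Matsumoto--Tsumura partial fraction decomposition with $(X,Y)=(L_a,M_b)$, the same identification of the $p\mid L_a$ (resp.\ $p\mid M_b$, resp.\ $p\mid L_a+M_b$) contributions with the $\zeta^{(j)}_\cA$-terms via the substitutions you describe. Your explicit mention of the hockey-stick summation $\sum_{\tau=0}^{\mu_b-1}\binom{\lambda_a-1+\tau}{\tau}=\binom{\lambda_a+\mu_b-1}{\lambda_a}$ and of the bookkeeping of the $p$-integrality conditions only makes explicit steps the paper leaves implicit.
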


The next lemma plays the most important role in the proof of Proposition \ref{key prop}.

\begin{lemma}[{[KMT]} equation(26)] \label{PFD}
For indeterminates $X$ and $Y$ and positive integers $\alpha$ and $\beta$, 
we have the following partial fraction decomposition;
$$
\frac{1}{X^\alpha Y^\beta}=\sum_{\tau=0}^{\beta-1}\binom{\alpha-1+\tau}{\tau}\frac{1}{(X+Y)^{\alpha+\tau}Y^{\beta-\tau}}
                                           +\sum_{\tau=0}^{\alpha-1}\binom{\beta-1+\tau}{\tau}\frac{1}{(X+Y)^{\beta+\tau}X^{\alpha-\tau}}.
$$
\end{lemma}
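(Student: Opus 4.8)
The plan is to prove the identity directly in the field of rational functions $\mathbb{Q}(X,Y)$ by passing to a formal-Laurent-series completion, in which the binomial coefficients drop out of a single Taylor expansion and the remaining bookkeeping collapses to a classical Vandermonde convolution. First I would change variables to $S := X+Y$, so that $\mathbb{Q}(X,Y)=\mathbb{Q}(S,Y)$ with $X=S-Y$, and embed this field into the Laurent series field $\mathbb{Q}(S)((Y))$ by expanding every rational function around $Y=0$. Since this embedding is injective, it suffices to verify the asserted equality coefficientwise in $\mathbb{Q}(S)((Y))$. I expand the left-hand side using the binomial series $\frac{1}{(S-Y)^\alpha}=\sum_{\tau\ge 0}\binom{\alpha-1+\tau}{\tau}Y^{\tau}S^{-(\alpha+\tau)}$, which gives $\frac{1}{X^\alpha Y^\beta}=\sum_{\tau\ge 0}\binom{\alpha-1+\tau}{\tau}Y^{\tau-\beta}S^{-(\alpha+\tau)}$. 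The terms with $\tau\le\beta-1$ are precisely the negative powers of $Y$; rewriting $Y^{\tau-\beta}S^{-(\alpha+\tau)}=\frac{1}{(X+Y)^{\alpha+\tau}Y^{\beta-\tau}}$ shows that the principal part of the left-hand side is, term by term, the first sum on the right-hand side.

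It then remains to identify the regular part $\sum_{\tau\ge\beta}\binom{\alpha-1+\tau}{\tau}Y^{\tau-\beta}S^{-(\alpha+\tau)}$ with the second sum. Each summand of the second sum lies in $\mathbb{Q}(S)[[Y]]$, so I expand it by the same binomial series applied to $\frac{1}{X^{\alpha-\tau}}=\frac{1}{(S-Y)^{\alpha-\tau}}$ and collect the coefficient of $Y^{\sigma}S^{-(\alpha+\beta+\sigma)}$ for each $\sigma\ge 0$. On the regular part of the left-hand side this coefficient is $\binom{\alpha+\beta-1+\sigma}{\beta+\sigma}$ (taking $\tau=\beta+\sigma$), while on the second sum it is $\sum_{\tau=0}^{\alpha-1}\binom{\beta-1+\tau}{\tau}\binom{\alpha-1-\tau+\sigma}{\sigma}$. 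Thus the whole lemma reduces to the single combinatorial identity $\sum_{i+j=\alpha-1}\binom{(\beta-1)+i}{i}\binom{\sigma+j}{j}=\binom{(\beta-1)+\sigma+1+(\alpha-1)}{\alpha-1}$, which is the standard Vandermonde (Chu) convolution and can be proved by a short induction on $\alpha$ using Pascal's rule.

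I expect the only genuine subtlety to be organizational rather than deep: one must keep the principal and regular parts cleanly separated and be consistent about expanding every factor $1/(S-Y)^{k}$ as a power series in $Y$ over $\mathbb{Q}(S)$, the point being that the identity holds as an equality of rational functions precisely because it holds after this (injective) Laurent expansion. As an alternative that avoids series altogether, one can instead iterate the seed identity $\frac{1}{XY}=\frac{1}{X+Y}\bigl(\frac{1}{X}+\frac{1}{Y}\bigr)$ in the form $\frac{1}{(X+Y)^{\gamma}X^{a}Y^{b}}=\frac{1}{(X+Y)^{\gamma+1}X^{a-1}Y^{b}}+\frac{1}{(X+Y)^{\gamma+1}X^{a}Y^{b-1}}$, reading off the two sums as the contributions of the terminal states whose $Y$-exponent or $X$-exponent has reached zero. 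In this second approach the binomial coefficients $\binom{\beta-1+\tau}{\tau}$ and $\binom{\alpha-1+\tau}{\tau}$ arise as lattice-path counts, and the main care is the boundary analysis showing that each path terminates exactly when its last decrement hits $0$.
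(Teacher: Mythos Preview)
Your argument is correct. The Laurent-series route in $\mathbb{Q}(S)((Y))$ with $S=X+Y$ cleanly isolates the first sum as the principal part, and the coefficient comparison for the regular part does reduce to the negative-binomial (Chu--Vandermonde) convolution
\[
\sum_{i+j=\alpha-1}\binom{(\beta-1)+i}{i}\binom{\sigma+j}{j}=\binom{\alpha+\beta+\sigma-1}{\alpha-1},
\]
which is standard. Your alternative lattice-path argument via iterating $\frac{1}{XY}=\frac{1}{X+Y}\bigl(\frac{1}{X}+\frac{1}{Y}\bigr)$ is also valid and arguably the most natural way to \emph{discover} the identity.

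As for comparison with the paper: there is nothing to compare against. The paper does not prove this lemma at all; it merely quotes it as equation~(26) of Komori--Matsumoto--Tsumura \cite{KMT} and then applies it. So your write-up is strictly more than what the paper provides. If you want to match the paper's treatment, a one-line citation suffices; if you want a self-contained proof, either of your two approaches works, with the iterative one being slightly shorter to write out in full.
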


\begin{proof}[Proof of Proposition \ref{key prop}]

First, we separate $\li(\boldsymbol{\lambda}, \boldsymbol{\mu}, \boldsymbol{\nu}; T)$ into 
two parts according to that  $L_a+M_b$ is prime to $p$ or not: 
\begin{equation} \label{eq:8}
\li(\boldsymbol{\lambda}, \boldsymbol{\mu}, \boldsymbol{\nu}; T)
=\Biggl(\sideset{}{'}\sum_{\substack{0<l_1, \ldots, l_a<p \\ 0<m_1, \ldots, m_b<p \\ 0<n_1, \ldots, n_c<p \\ p \nmid L_a+M_b}} 
   + \sideset{}{'}\sum_{\substack{0<l_1, \ldots, l_a<p \\ 0<m_1, \ldots, m_b<p \\ 0<n_1, \ldots, n_c<p \\ p\mid L_a+M_b}}\Biggr)
   \frac{T^{L_a+M_b+N_c}}
{\displaystyle\prod_{x=1}^a L^{\lambda_x}_x
\displaystyle\prod_{y=1}^b M^{\mu_y}_y
\displaystyle\prod_{z=1}^c \left(L_a+M_b+N_z\right)^{\nu_z}}.
\end{equation}
The second term in \eqref{eq:8} is calculated as 
\begin{align*}
&\sum_{i=1}^{a+b-1}\sideset{}{'}\sum_{\substack{0<l_1, \ldots, l_a<p \\ 0<m_1, \ldots, m_b<p \\ 0<n_1, \ldots, n_c<p \\ L_a+M_b=ip}}
\frac{T^{L_a+M_b+N_c}}
{
\displaystyle\prod_{x=1}^a L^{\lambda_x}_x
\displaystyle\prod_{y=1}^b M^{\mu_y}_y
\displaystyle\prod_{z=1}^c \left(L_a+M_b+N_z\right)^{\nu_z}
}\\
=&(-1)^{\wt(\boldsymbol{\mu})}\sum_{i=1}^{a+b-1}(T^p)^i
\sideset{}{'}\sum_{\substack{0<l_1, \ldots, l_a<p \\ 0<m_2, \ldots, m_b<p \\ 0<n_1, \ldots, n_c<p \\ (i-1)p<L_a+m_b+\cdots+m_2<ip}}
\frac{T^{N_c}}
{
\displaystyle\prod_{x=1}^{a-1} L^{\lambda_x}_x L^{\lambda_a+\mu_b}_a
\displaystyle\prod_{y=1}^{b-1}(L_a+m_b+\cdots+m_{y+1})^{\mu_y}
\displaystyle\prod_{z=1}^c N^{\nu_z}_z
},
\end{align*}
by using the congruences $L_a+m_b+\cdots+m_{y+1} \equiv -M_y \pmod{p}\; (1 \leq y \leq b-1)$.
If we regard that $l_{a+1}:=m_b, \ldots, l_{a+b-1}:=m_2$, 
we see that the second term in \eqref{eq:8} coincides with
$$
(-1)^{\wt(\boldsymbol{\mu})}\left(\sum_{i=1}^{a+b-1}
\zeta^{(i)}_\cA(\boldsymbol\lambda \star \boldsymbol\mu)(T^p)^i\right)
\li_{\boldsymbol{\nu}}(T).
$$
Next, using Lemma \ref{PFD} for $(X, Y)=(L_a, M_b)$ and $(\alpha, \beta)=(\lambda_a, \mu_b)$, 
we calculate the first term in \eqref{eq:8} as follows:
\begin{align}
&\sum_{\tau=0}^{\mu_b-1}\binom{\lambda_a-1+\tau}{\tau}
\sideset{}{'}\sum_{\substack{0<l_1, \ldots, l_a<p \\0<m_1, \ldots, m_b<p \\0<n_1, \ldots, n_c<p \\ p \nmid L_a}}
\frac{T^{L_a+M_b+N_c}}
{
\displaystyle\prod_{x=1}^{a-1} L^{\lambda_x}_x
\displaystyle\prod_{y=1}^{b-1} M^{\mu_y}_y
M^{\mu_b-\tau}_b(L_a+M_b)^{\lambda_a+\tau}
\displaystyle\prod_{z=1}^c \left(L_a+M_b+N_z\right)^{\nu_z}
} \label{eq:9} \\
&+\sum_{\tau=0}^{\lambda_a-1}\binom{\mu_b-1+\tau}{\tau}
\sideset{}{'}\sum_{\substack{0<l_1, \ldots, l_a<p \\0<m_1, \ldots, m_b<p \\0<n_1, \ldots, n_c<p \\p\nmid M_b}}
\frac{T^{L_a+M_b+N_c}}
{
\displaystyle\prod_{x=1}^{a-1} L^{\lambda_x}_x
L^{\lambda_a-\tau}_a
\displaystyle\prod_{y=1}^{b-1} M^{\mu_y}_y
(L_a+M_b)^{\mu_b+\tau}
\displaystyle\prod_{z=1}^c \left(L_a+M_b+N_z\right)^{\nu_z}
}. \label{eq:10}
\end{align}
Here, \eqref{eq:9} is separated as follows:
$$
\sideset{}{'}\sum_{\substack{0<l_1, \ldots, l_a<p \\ 0<m_1, \ldots, m_b<p \\ 0<n_1, \ldots, n_c<p \\ p\nmid L_a,}} =
\sideset{}{'}\sum_{\substack{0<l_1, \ldots, l_a<p \\ 0<m_1, \ldots, m_b<p \\ 0<n_1, \ldots, n_c<p }}
- \sideset{}{'}\sum_{\substack{0<l_1, \ldots, l_a<p \\ 0<m_1, \ldots, m_b<p \\ 0<n_1, \ldots, n_c<p \\p \mid L_a}}.
$$
The first sum in the right hand side coincides with 
$\li((\lambda_1, \ldots, \lambda_{a-1}), (\mu_1, \ldots, \mu_{b-1}, \mu_b-\tau), (\lambda_a+\tau) \bullet \boldsymbol{\nu}; T)$.
The second can be calculated as follows:

\begin{align*} 
& \sum_{j=1}^{a-1}\sideset{}{'}\sum_{\substack{0<l_1, \ldots, l_a<p \\ 0<m_1, \ldots, m_b<p \\ 0<n_1, \ldots, n_c<p \\ L_a=jp}} 
\frac{T^{L_a+M_b+N_c}}
{
\displaystyle\prod_{x=1}^{a-1} L^{\lambda_x}_x
\displaystyle\prod_{y=1}^{b-1} M^{\mu_y}_y
M^{\mu_b-\tau}_b (L_a+M_b)^{\lambda_a+\tau}
\displaystyle\prod_{z=1}^c \left(L_a+M_b+N_z\right)^{\nu_z}
}\\
=&\sum_{j=1}^{a-1}(T^p)^j\sideset{}{'}\sum_{\substack{0<l_1, \ldots, l_{a-1}<p \\ 0<m_1, \ldots, m_b<p \\ 0<n_1, \ldots, n_c<p \\ (j-1)p<L_{a-1}<jp}} 
\frac{T^{M_b+N_c}}
{
\displaystyle\prod_{x=1}^{a-1} L^{\lambda_x}_x
\displaystyle\prod_{y=1}^{b-1} M^{\mu_y}_y
M^{\lambda_a+\mu_b}_b
\displaystyle\prod_{z=1}^c \left(M_b+N_z\right)^{\nu_z}
}\\
=& \left(\sum_{j=1}^{a-1}
\zeta^{(j)}_\cA(\lambda_1, \ldots, \lambda_{a-1})(T^p)^j\right)\li_{\mu_1, \ldots, \mu_{b-1}, \lambda_a+\mu_b, \boldsymbol{\nu}}(T).
\end{align*}
By the same calculation for \eqref{eq:10}, we obtain the desired formula.
\end{proof}

\begin{proof}[Proof of Theorem \ref{MainTheorem}]
By Proposition \ref{variant}, we see that the variant $\zeta^{(i)}_\cA$ of FMZV is contained in $\cZ_\cA$.
Further, note that all terms in \eqref{eq:7} have total weight $w := \wt(\blambda) + \wt(\bmu) + \wt(\bnu)$, 
and all the 3rd, 4th and 5th terms in the right hand side belong in \eqref{eq:7} to $\cR_{w, w-1}$.
Hence we have
\begin{align} \label{eq:11}
\li(\blambda, \bmu, \bnu; T)\equiv&\sum_{\tau=0}^{\mu_b-1}\binom{\lambda_a-1+\tau}{\tau}
     \li((\lambda_1, \ldots, \lambda_{a-1}), (\mu_1, \ldots, \mu_{b-1}, \mu_b-\tau), (\lambda_a+\tau) \bullet \bnu; T)\\
  &+\sum_{\tau=0}^{\lambda_a-1}\binom{\mu_b-1+\tau}{\tau}
     \li((\lambda_1, \ldots, \lambda_{a-1}, \lambda_a-\tau), (\mu_1, \ldots, \mu_{b-1}), (\mu_b+\tau) \bullet \bnu; T)\nonumber
\end{align}
in $\cR_w/\cR_{w, w-1}$. 
Therefore, the main theorem is obtained from the same argument in \cite{KMT}.
Namely, consider the $\bbQ$-linear map
\begin{equation*}
Z : \frH \longrightarrow \cR; \quad z_{\bk} \longmapsto \li_{\bk}(T).
\end{equation*}
Then by induction on $\dep(\blambda)+\dep(\bmu)$, we can prove 
$Z((z_{\blambda}\sh z_{\bmu})z_{\bnu}) \equiv \li(\blambda, \bmu, \bnu; T)$ in $\cR_w/\cR_{w, w-1}$ by using \eqref{eq:11}. 
Thus we can prove $\li_{\blambda}(T)\li_{\bmu}(T)\equiv \li_{\blambda\sh\bmu}(T) \pmod{\cR_{w,w-1}}$ by equation (17) in \cite{KMT}.
This completes the proof of Theorem \ref{MainTheorem}.
\end{proof}

The following is a corollary of Theorem \ref{MainTheorem}.

\begin{corollary} \label{algebra}
$\cR$ forms a $\cZ_\cA[T^p]$-subalgebra of $\cB$.
\end{corollary}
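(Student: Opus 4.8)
The plan is to show that $\cR$ is closed under multiplication; since it is already a $\cZ_\cA[T^p]$-submodule of $\cB$ by construction, this suffices to make it a $\cZ_\cA[T^p]$-subalgebra. Concretely, $\cR = \sum_{\bk \in I} \cZ_\cA[T^p]\li_\bk(T)$, so a general element is a $\cZ_\cA[T^p]$-linear combination of the $\li_\bk(T)$, and by bilinearity of the product it is enough to prove that $\li_\bk(T)\li_{\bk'}(T) \in \cR$ for all indices $\bk, \bk'$.

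The key step is to invoke Theorem \ref{MainTheorem}, which gives
$$
\li_\bk(T)\li_{\bk'}(T) \equiv \li_{\bk\sh\bk'}(T) \pmod{\cR_{k+k',\,k+k'-1}},
$$
where $k = \wt(\bk)$, $k' = \wt(\bk')$. Now $\li_{\bk\sh\bk'}(T)$ is by definition a $\bbQ$-linear combination of FMPs $\li_\bm(T)$ with $\bm$ ranging over the indices appearing in $\bk\sh\bk'$, hence lies in $\cR$ (indeed in $\sum_\bm \bbQ\,\li_\bm(T) \subset \cR$). On the other hand, by Definition \ref{filtration} the space $\cR_{k+k',\,k+k'-1}$ is spanned by elements of the form $\zeta_\cA(\bl)\cdot (T^p)^n \cdot \li_{\bl'}(T)$ with $n \in \bbZ_{\geq 0}$; since $\zeta_\cA(\bl) \in \cZ_\cA$ and $(T^p)^n \in \cZ_\cA[T^p]$, each such generator lies in $\cZ_\cA[T^p]\li_{\bl'}(T) \subset \cR$, so $\cR_{k+k',\,k+k'-1} \subseteq \cR$. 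Adding the two contributions gives $\li_\bk(T)\li_{\bk'}(T) \in \cR$, which is what we need.

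There is no real obstacle here: the corollary is a bookkeeping consequence of the main theorem together with the explicit description of $\cR$ and of the filtration spaces $\cR_{a,b}$. The only point deserving a sentence of care is that $\cZ_\cA$ is itself a $\bbQ$-algebra (closed under the stuffle product of FMZVs, as noted in the Notation section), so that $\cZ_\cA[T^p]$ is a well-defined commutative $\bbQ$-algebra and it makes sense to speak of a $\cZ_\cA[T^p]$-subalgebra; this was already used implicitly in defining $\cR$ as a $\cZ_\cA[T^p]$-submodule. Once that is in place, the argument above closes the proof.
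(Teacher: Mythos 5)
Your argument is correct and is exactly how the paper intends the corollary to be read: the paper gives no separate proof, treating it as an immediate consequence of Theorem \ref{MainTheorem}, and your reduction (bilinearity over $\cZ_\cA[T^p]$, then $\li_\bk(T)\li_{\bk'}(T)\in\li_{\bk\sh\bk'}(T)+\cR_{k+k',k+k'-1}\subseteq\cR$) together with the remark that $\cZ_\cA$ is a $\bbQ$-algebra by the stuffle relation is precisely the intended bookkeeping.
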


\begin{remark}
Let $\cP$ be the $\bbQ$-vector space generated by the multiple polylogarithms:
$$
\cP:=\sum_{\bk \in I}\bbQ\Li_{\bk}(T).
$$ 
Then Corollary \ref{algebra} may be regarded as a finite analogue of the fact that
$\cP$ is a $\bbQ$-algebra, which is a consequence of the shuffle relation.
\end{remark}

\section{An algebraic interpretation}

In this section, we will give an algebraic interpretation of our main theorem (Theorem \ref{MainTheorem}).
Note that by Corollary \ref{algebra}, we have 
\begin{equation} \label{eq:12}
\cR_{a_1, b_1}\cdot\cR_{a_2, b_2} \subset \cR_{a_1+a_2, b_1+b_2}
\end{equation}
for all $a_1, a_2, b_1, b_2$ with $a_1 \geq b_1 \geq 0$ and $a_2 \geq b_2 \geq 0$.

\begin{definition} \label{graded-like algebra associated FMP}
For non-negative integers $a$ and $b \in \{0, \ldots, a\}$,
we define $\bbQ$-vector spaces
$$
\overline{\cR_{a, b}}:=\cR_{a, b}/\cR_{a, b-1}
$$
and set
$$
\overline{\cR}:=\bigoplus_{a \geq b \geq 0}\overline{\cR_{a,b}}.
$$
Note that $\overline{\cR}$ has a natural $\bbQ$-algebraic structure induced from \eqref{eq:12}.
\end{definition}

\begin{definition} \label{stuffle product}
We define the stuffle product $* : \frH^1 \times \frH^1 \rightarrow \frH^1$ on $\frH^1$ by the following rule and $\bbQ$-bilinearity.
\begin{enumerate}
\renewcommand{\labelenumi}{(\roman{enumi})}

\item $w * 1=1 * w =w$ for all $w \in \frH^1$.

\item $(z_kw_1)*(z_lw_2)=z_k(w_1*z_lw_2)+z_l(z_kw_1* w_2)+z_{k+l}(w_1*w_2)$ for all $w_1, w_2 \in \frH^1$ and $k, l \in \bbZ_{\geq1}$.

\end{enumerate}
For instance, we have
$$
z_2 * z_3=z_2z_3+z_3z_2+z_5.
$$
We define the stuffle product $\bk*\bk'$ of indices $\bk$ and $\bk'$ as the formal sum of indices 
corresponding to $z_{\bk} * z_{\bk'}$. For instance, 
$$
(2)*(3)=(2, 3)+(3, 2)+(5).
$$
\end{definition}

\begin{definition} \label{shuffle-stuffle algebra}
We define a $\bbQ$-vector space $\cS$ by
$$
\cS:=\bigoplus_{\bk, \bk' \in I}\bbQ u_{\bk}v_{\bk'}.
$$
Here $u_{\bk}$ and $v_{\bk'}$ are indeterminates associated to $\bk$ and $\bk'$.
We define the product on $\cS$ by the usual product of $\bbQ$, the stuffle product for $u_\bk$ and the shuffle product for $v_{\bk'}$,
i.e., for $a, b \in \bbQ$ and $\bk_1, \bk_2, \bk'_1, \bk'_2 \in I$, 
$$
(au_{\bk_1}v_{\bk'_1}) \cdot (bu_{\bk_2}v_{\bk'_2}):=ab(u_{\bk_1}*u_{\bk_2})(v_{\bk'_1}\sh v_{\bk'_2}).
$$
Then $(\cS, \cdot)$ is a $\bbQ$-algebra and we call $\cS$ the stuffle-shuffle algebra over $\bbQ$.
\end{definition}

An algebraic interpretation of our main theorem is as follows.

\begin{corollary} \label{algebraic interpretation}
The $\bbQ$-linear homomorphism
$$
\varphi : \cS \rightarrow \overline{\cR} \; ; \;\; u_\bk v_{\bk'} \mapsto \zeta_\cA(\bk)\li_{\bk'}(T) 
$$
is a $\bbQ$-algebra homomorphism.
\end{corollary}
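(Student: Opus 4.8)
The plan is to check directly that $\varphi$ is multiplicative on a $\bbQ$-basis of $\cS$, reducing the whole statement to Theorem \ref{MainTheorem} together with the stuffle relation for FMZVs. First I would dispose of well-definedness: the monomials $u_\bk v_{\bk'}$ form a $\bbQ$-basis of $\cS$, so there is a unique $\bbQ$-linear $\varphi$ with $\varphi(u_\bk v_{\bk'})$ equal to the class of $\zeta_\cA(\bk)\li_{\bk'}(T)$ in $\overline{\cR_{a,b}}\subset\overline\cR$, where $a=\wt(\bk)+\wt(\bk')$ and $b=\wt(\bk')$ (indeed $\zeta_\cA(\bk)\li_{\bk'}(T)\in\cR_{a,b}$ by Definition \ref{filtration}). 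Since $\varphi$ is linear and the multiplications on $\cS$ and $\overline\cR$ are bilinear, it suffices to prove
\[
\varphi\bigl((u_{\bk_1}v_{\bk'_1})\cdot(u_{\bk_2}v_{\bk'_2})\bigr)=\varphi(u_{\bk_1}v_{\bk'_1})\cdot\varphi(u_{\bk_2}v_{\bk'_2})
\]
for all indices $\bk_1,\bk_2,\bk'_1,\bk'_2$. Put $w':=\wt(\bk'_1)+\wt(\bk'_2)$ and $w:=\wt(\bk_1)+\wt(\bk_2)+w'$; since the stuffle and shuffle products are weight-homogeneous, both sides will be seen to lie in the single summand $\overline{\cR_{w,w'}}$ of $\overline\cR$.

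For the left-hand side, I would expand $u_{\bk_1}*u_{\bk_2}$ and $v_{\bk'_1}\sh v_{\bk'_2}$ into the basis of $\cS$ and apply $\varphi$ termwise. The $u$-part contributes $\zeta_\cA$ applied termwise to the formal sum $\bk_1*\bk_2$, which by the stuffle relation for FMZVs (the algebra structure on $\cZ_\cA$ recalled in the Notation) equals $\zeta_\cA(\bk_1)\zeta_\cA(\bk_2)$; the $v$-part contributes $\li$ applied termwise to $\bk'_1\sh\bk'_2$, which is by definition $\li_{\bk'_1\sh\bk'_2}(T)$. Hence the left-hand side is the class of $\zeta_\cA(\bk_1)\zeta_\cA(\bk_2)\,\li_{\bk'_1\sh\bk'_2}(T)$ in $\overline{\cR_{w,w'}}$.

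For the right-hand side, the product on $\overline\cR$ is induced via \eqref{eq:12} from multiplication in $\cB$, so $\varphi(u_{\bk_1}v_{\bk'_1})\cdot\varphi(u_{\bk_2}v_{\bk'_2})$ is the class of $\zeta_\cA(\bk_1)\zeta_\cA(\bk_2)\,\li_{\bk'_1}(T)\li_{\bk'_2}(T)$ in $\overline{\cR_{w,w'}}$. By Theorem \ref{MainTheorem}, $\li_{\bk'_1}(T)\li_{\bk'_2}(T)-\li_{\bk'_1\sh\bk'_2}(T)\in\cR_{w',w'-1}$. I would then invoke the elementary inclusion $\cZ_{\cA,m}\cdot\cR_{c,d}\subset\cR_{m+c,d}$ --- valid because, by the stuffle relation, $\zeta_\cA(\bl)\zeta_\cA(\bk)$ is a $\bbQ$-combination of $\zeta_\cA$'s of weight $\wt(\bl)+\wt(\bk)$, so multiplying a generator $\zeta_\cA(\bk)(T^p)^n\li_{\bk'}(T)$ of $\cR_{c,d}$ by $\zeta_\cA(\bl)$ changes neither the factor $\li_{\bk'}(T)$ nor hence the second filtration index $\wt(\bk')\le d$. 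Applying this with $m=\wt(\bk_1)+\wt(\bk_2)$ gives
\[
\zeta_\cA(\bk_1)\zeta_\cA(\bk_2)\bigl(\li_{\bk'_1}(T)\li_{\bk'_2}(T)-\li_{\bk'_1\sh\bk'_2}(T)\bigr)\in\cR_{w,w'-1},
\]
so the two relevant elements of $\cR_{w,w'}$ have the same image in $\overline{\cR_{w,w'}}=\cR_{w,w'}/\cR_{w,w'-1}$. Therefore the right-hand side equals the left-hand side, which proves the corollary.

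The step that needs the most care is the filtration bookkeeping: one must track both indices $(a,b)$ on $\cR$ simultaneously and check that multiplication by $\cZ_\cA$ raises the weight index but leaves the $\wt(\bk')$-bound untouched, so that the correction term supplied by Theorem \ref{MainTheorem} lands precisely in the subspace that is killed in $\overline{\cR_{w,w'}}$. Everything else --- well-definedness, weight-homogeneity, and the two termwise identifications --- is routine once Theorem \ref{MainTheorem} and the stuffle product of FMZVs are available.
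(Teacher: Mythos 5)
Your proposal is correct and follows essentially the same route as the paper's (much terser) proof: check multiplicativity on generators, use the stuffle relation of FMZVs for the $\zeta_\cA$-factors and Theorem \ref{MainTheorem} for the $\li$-factors, and observe that the correction term lands in $\cR_{w,w'-1}$ and hence vanishes in $\overline{\cR_{w,w'}}$. The only difference is presentational: you spell out the auxiliary inclusion $\cZ_{\cA,m}\cdot\cR_{c,d}\subset\cR_{m+c,d}$ and the filtration bookkeeping that the paper leaves implicit, and you work with the basis monomials of $\cS$ (no $(T^p)^n$ factor) rather than with general generators of $\overline{\cR_{a,b}}$.
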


\begin{proof}
Note that FMZVs satisfy the stuffle relation. 
For any $\zeta_\cA(\bk_1)(T^p)^{n_1}\li_{\bk'_1}(T) \in \overline{\cR_{a_1, b_1}}$ and 
$\zeta_\cA(\bk_2)(T^p)^{n_2}\li_{\bk'_2}(T) \in \overline{\cR_{a_2, b_2}}$ 
($n_1, n_2, a_1, a_2, b_1, b_2 \in \bbZ_{\geq0}$ with $a_1\geq b_1 \geq 0$ and $a_2 \geq b_2 \geq 0$), 
we have the following equality by Theorem \ref{MainTheorem} and the stuffle relation of FMZVs:
$$
\zeta_\cA(\bk_1)(T^p)^{n_1}\li_{\bk'_1}(T) \cdot \zeta_\cA(\bk_2)(T^p)^{n_2}\li_{\bk'_2}(T)
=\zeta_\cA(\bk_1 * \bk_2)(T^p)^{n_1+n_2}\li_{\bk'_1 \sh \bk'_2}(T) \in \overline{\cR_{a_1+a_2, b_1+b_2}}.
$$
This completes the proof.
\end{proof}

{\ }\\
\underline{\textbf{Acknowledgement}}\\
The authors would like to thank the members of the KiPAS-AGNT group for giving us a great environment to study and reading the manuscript carefully,
and members of the Department of Mathematics at Keio University for their hospitality.
This research was supported in part by JSPS KAKENHI 21674001, 26247004.


\begin{thebibliography}{BBBL}
%
%
	
         \bibitem[BBBL]{BBBL}{M. J. Borwein, M. D. Bradley, D. J. Broadhurst, P. Lison\v{e}k}. Special values of multiple polylogarithms. Trans. Amer. Math. Soc. \textbf{353} (2001), no. 3, 907-941. 
         \bibitem[Bes]{Bes}{A. Besser}, Finite and $p$-adic polylogarithms, Compositio Math. \textbf{130} (2002), 215--223.
         \bibitem[BH]{BH}{A. Bayad, Y. Hamahata}, Multiple polylogarithms and multi-poly-Bernoulli polynomials. Funct. Approx. Comment. Math. \textbf{46} (2012), part 1, 45-61.
         \bibitem[BK]{BK}{D. J. Broadhurst, D. Kreimer}, Association of multiple zeta values with positive knots via Feynman diagrams up to 9 loops, Physics Lett. B \textbf{393} (1997), 403-412.
         \bibitem[DG]{DG}{P. Deligne, A. Goncharov}, Groupes fondamentaux motiviques de Tate mixte, Ann. Sci. Ecole Norm. Sup. (4) \textbf{38} (2005), 1-56.
         \bibitem[EG]{EG}{P. Elbaz-Vincent, H. Gangl}, On Poly(ana)logs I, Compositio Math. \textbf{130} (2002), 211--214.
         \bibitem[Fur]{fur}{H. Furusho}, $p$-adic multiple zeta values, I;$p$-adic multiple polylogarithms and the $p$-adic KZ equation, Invent. Math. \textbf{155} (2004) 253--286.
         \bibitem[Gon]{Gon}{A. B. Goncharov}, Multiple polylogarithms, cyclotomy and modular complexes. Math. Res. Lett. \textbf{5} (1998), no. 4, 497-516.
         \bibitem[KMT]{KMT}{Y. Komori, K. Matsumoto and H. Tsumura}, Shuffle products of multiple zeta values and partial fraction decompositions of zeta-functions of root systems, Math. Z. \textbf{268} (2011), 993--1011.
         \bibitem[KZ]{KZ}{M. Kaneko, D. Zagier}, finite multiple zeta values, in preparation.
         \bibitem[Kon]{Kon}{M. Kontsevich}, The 1+1/2 logarithm, Appendix to On Poly(ana)logs I by Philippe Elbaz-Vincent, Herbert Gangl, Compositio Math. \textbf{130} (2002), 211--214.
         \bibitem[LM]{LM}{T. Q. T. Le, J. Murakami}, Kontsevich's integral for the Homfly polynomial and relations between values of the multiple zeta functions, Topology Appl. \textbf{62} (1995), 193-206.
        
         
\end{thebibliography}
\end{document}